\providecommand{\@LN}[2]{}
\newtheorem{thm}{Theorem}[section]
\newtheorem{lem}[thm]{Lemma}
\newtheorem{prop}[thm]{Proposition}
\newtheorem{coro}[thm]{Corollary}
\theoremstyle{remark}
\newtheorem{rema}[thm]{Remark}
\newtheorem{exa}[thm]{Example}
\newtheorem{defi}[thm]{Definition}
\title{Reduction of symplectic groupoids and quotients of quasi-Poisson manifolds}
\author{Daniel \'Alvarez}\address{Instituto de Matem\'atica Pura e Aplicada, Estrada Dona Castorina, 110, Jardim Bot\^anico, CEP \texttt{22460320}, Rio de Janeiro, Brasil}
\email{uerbum@impa.br}
\date{} % Delete this line to display the current date
\begin{document}

%\layout

\begin{abstract} In this work we study the integrability of quotients of quasi-Poisson manifolds. Our approach allows us to put several classical results about the integrability of Poisson quotients in a common framework. By categorifying one of the already known methods of reducing symplectic groupoids we also describe double symplectic groupoids which integrate the recently introduced Poisson groupoid structures on gauge groupoids.
\end{abstract}\maketitle
\tableofcontents % the asterisk means that the contents itself isn't put into the ToC

\section{Introduction} 

One of the main tools in the study of Poisson manifolds is the concept of symplectic groupoid, see \cite{catfel,groqua,cradifcoh,conlin,craruimar} for some of its most exciting applications. So a basic problem in Poisson geometry is the construction of interesting examples of symplectic groupoids. Unlike finite-dimensional Lie algebras, which always admit integrations to Lie groups, not every Poisson manifold is ``integrable'' to a symplectic groupoid \cite{weisygr}. Although general criteria for the integrability of Poisson manifolds (and Lie algebroids in general) were established in \cite{crarui,craruipoi}, in most cases, these conditions do not easily lead to an explicit construction. 

In this paper we address the problem of describing symplectic groupoids which integrate Poisson manifolds obtained as quotients of Lie groupoid actions on q-Poisson manifolds. The study of the integrability of quotient Poisson structures began with \cite{mikwei}, where it was established that  the quotient of a symplectic manifold $S$ by a Lie group action is integrable by performing Marsden-Weinstein reduction on the fundamental groupoid of $S$. Subsequently, it was proven in \cite{mommap} that the quotient of an integrable Poisson manifold $S$ by a Lie group action by automorphisms is also integrable by a Marsden-Weinstein quotient of the source-simply-connected integration of $S$. The work in \cite{ferigl,morla} generalized this result for Poisson actions of Poisson groupoids on integrable Poisson manifolds. In this work, we generalize these results even further by considering Poisson quotients of {\em quasi-Poisson (q-Poisson) manifolds}

The study of q-Poisson manifolds began with the finite-dimensional description of Poisson structures on representation varieties provided in \cite{alemeikos}. Later on, it was realized that most of the known methods of Poisson reduction by symmetries could be described in terms of an even broader notion of q-Poisson manifold \cite{quapoi}. Roughly, a {\em q-Poisson manifold} (in the general sense of \cite{quapoi}) is a manifold endowed with a suitable (global or infinitesimal) action and a bilinear bracket on the space of smooth functions that fails to be Poisson in a way controlled by the action; an important feature is that the orbit space of such an action turns out to carry a genuine Poisson structure, provided it is smooth. Let $(S,\pi)$ be a q-Poisson manifold for a Lie quasi-bialgebroid $(A,\delta,\chi)$ such that the moment map $J:S \rightarrow M$ is a surjective submersion. There is a canonical Lie algebroid structure on the conormal bundle $C$ of the $A$-orbits, provided it is smooth. If the $A$-action on $S$ integrates to a $G$-action, where $G \rightrightarrows M $ is a Lie groupoid integrating $A$, and if the $G$-action on $S$ is free and proper, then the quotient $S/G$ inherits a canonical Poisson structure $\sigma$. 
\begin{thm}\label{main} The Poisson manifold $(S/G,\sigma)$ is integrable if and only if the Lie algebroid $C$ is integrable. Moreover, if $\mathcal{G} (C)$ is the source-simply-connected integration of $C$, then there is a lifted $G$-action on $\mathcal{G} (C)$ such that the orbit space $\mathcal{G} (C)/G $ is a symplectic groupoid integrating $(S/G,\sigma)$. \end{thm}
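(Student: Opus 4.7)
My plan is to reduce the statement to a statement about pullback Lie algebroids, since $C$ and $T^*(S/G)$ are naturally linked by the quotient map $q:S\to S/G$. Because the $G$-action on $S$ integrates the $A$-action, the $G$-orbits coincide with the $A$-orbits; hence $C$, being the conormal bundle to the $A$-orbits, is set-theoretically the pullback bundle $q^{*}T^{*}(S/G)$. The first step is to verify that the canonical Lie algebroid structure on $C$ (coming from the Lie quasi-bialgebroid data and $\pi$) coincides with the pullback Lie algebroid structure induced from the cotangent algebroid of $(S/G,\sigma)$ along $q$. Granting this, standard results on pullback algebroids along surjective submersions with connected fibres yield the integrability equivalence: $C$ integrable iff $T^{*}(S/G)$ integrable, i.e.\ iff $(S/G,\sigma)$ is integrable, and in that case $\mathcal{G}(C)$ is the pullback groupoid $q^{!!}\Sigma(S/G)$ where $\Sigma(S/G)$ denotes the source-simply-connected symplectic groupoid of $(S/G,\sigma)$.

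For the second assertion, I would construct the lifted $G$-action on $\mathcal{G}(C)$ via the universal property of the source-simply-connected integration. Infinitesimally, each $g\in G$ acts on $S$ by a diffeomorphism preserving the Lie algebroid structure of $C$ (this should follow from the compatibility of the $G$-action with the q-Poisson data of $(S,\pi)$ and with the identification of $C$ as a pullback); such Lie algebroid automorphisms lift uniquely to Lie groupoid automorphisms of $\mathcal{G}(C)$, and the resulting action is smooth because the action on $S$ is. Freeness and properness of the lifted action follow from the analogous properties downstairs, using that $\mathcal{G}(C)\to S$ has connected source-fibres through which $G$ acts trivially in a controlled way. The orbit space $\mathcal{G}(C)/G$ is then a Lie groupoid over $S/G$.

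The main obstacle, as I see it, is identifying the correct symplectic form on $\mathcal{G}(C)/G$ and verifying multiplicativity and non-degeneracy. The natural approach is to exhibit a $G$-invariant, basic, multiplicative, closed $2$-form on $\mathcal{G}(C)$ whose kernel is exactly tangent to the $G$-orbits, and then descend it to $\mathcal{G}(C)/G$. Because $C$ is a pullback of $T^{*}(S/G)$, the integration $\mathcal{G}(C)$ inherits a canonical presymplectic form pulled back from the symplectic form on $\Sigma(S/G)$ via the pullback groupoid morphism $\mathcal{G}(C)\to \Sigma(S/G)$; the kernel of this form is precisely the vertical bundle of the map $\mathcal{G}(C)\to \Sigma(S/G)$, which should match the tangent distribution to the $G$-orbits. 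This identifies $\mathcal{G}(C)/G$ with $\Sigma(S/G)$ as a symplectic groupoid integrating $(S/G,\sigma)$. The technical heart of the argument thus lies in checking that the map $\mathcal{G}(C)/G\to \Sigma(S/G)$ constructed from these identifications is a diffeomorphism of symplectic groupoids, which ultimately reduces to the combination of the pullback-algebroid description of $C$ and the uniqueness of source-simply-connected integrations.
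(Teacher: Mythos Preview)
Your central claim---that $C$ is the pullback Lie algebroid $q^{!}T^{*}(S/G)$ and hence $\mathcal{G}(C)=q^{!!}\Sigma(S/G)$---fails on dimensional grounds. The pullback Lie algebroid along the submersion $q$ has fibre $T_pS\times_{T_{q(p)}(S/G)}T^*_{q(p)}(S/G)$ and therefore rank $\dim S$, whereas $C$, being the conormal bundle of the $G$-orbits, has rank $\dim(S/G)$. Accordingly $q^{!!}\Sigma(S/G)$ has dimension $2\dim S$, not $\dim S+\dim(S/G)=\dim\mathcal{G}(C)$. What is true is that $C$ coincides with the pullback \emph{vector bundle} $q^*T^*(S/G)$, carrying the \emph{action} Lie algebroid structure for the $T^*(S/G)$-action on $q$ determined by $\pi^\sharp$. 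But the ``standard results on pullback algebroids along surjective submersions with connected fibres'' you invoke do not transfer to this situation: for action algebroids, both the integrability equivalence and the identification of $\mathcal{G}(C)$ with an action groupoid $\Sigma(S/G)\ltimes S$ require additional hypotheses (completeness of the action, or $1$-connected fibres of $q$) that are not part of the setup---and the fibres of $q$ are source fibres of the groupoid $G$, which need not even be connected.

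The paper takes a different, intrinsic route that never invokes $\Sigma(S/G)$ in the forward direction. It observes that the inclusion $C\hookrightarrow T^*S$ is a closed IM $2$-form, which integrates to a closed multiplicative $2$-form $\omega$ on $\mathcal{G}(C)$; independently, the $G$-action on the Lie algebroid $C$ (in the sense of Definition~\ref{def:infgroact}) lifts to a $G$-action on the groupoid $\mathcal{G}(C)$ by the Moerdijk--Mr\v{c}un theorem on integrating groupoid actions on algebroids. The substance of the proof is then a direct verification that $\ker\omega$ coincides with the tangent distribution to the $G$-orbits and that $\omega$ is basic for the action, so that it descends to a multiplicative symplectic form on $\mathcal{G}(C)/G$. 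Only for the converse does a pullback construction appear: there $C$ is embedded as a Lie \emph{sub}algebroid of the pullback Dirac structure of $\sigma$ along $q$ (which is the genuine $q^{!}T^*(S/G)$), and integrability of the latter forces integrability of $C$.
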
 

Theorem \ref{main} is a consequence of the integrability of Lie groupoid actions on Lie algebroids established in \cite{intinfact}. We obtain some corollaries about Poisson reduction such as the following. Let $G$ be a Poisson group acting freely and properly by a Poisson action on a Poisson manifold $(S,\pi)$. Then the induced Poisson structure on $\overline{S}=S/G$ is integrable if $S$ is integrable \cite{mommap,ferigl}. For the original q-Poisson $\mathfrak{g}$-manifolds of \cite{alemeikos} we get a completely analogous result. If $(S,\pi)$ is a q-Poisson $G$-manifold, then there is a nonobvious but canonical Lie algebroid structure on $T^*S$ \cite{liblasev}. Theorem \ref{main} implies that the Poisson structure induced on $\overline{S}= S/G$ is integrable if $T^*S$ is integrable. In both of these situations, the Lie algebroid $C$ that appears in Theorem \ref{main} controlling the integrability of the quotient can be interpreted as the Lie algebroid of the level set corresponding to the unit of a Lie group valued moment map as in \cite{luphd} in the former case and in the sense of \cite{alemeimal} in the latter. This last observation is related to the integration of Poisson structures on moduli spaces of flat $G$-bundles that shall be studied in a companion paper, see \cite{intquopoi}. 

Poisson actions also allow us to obtain Poisson quotients by considering the action restricted to a coisotropic subgroup \cite{semdres,luphd}. In this context, the integrability of the quotient can be proved only under the assumption that the acting Poisson group is complete \cite{ferigl}. In this context, we have the following result which can be seen as a simple categorification of the quotient construction of  \cite{ferigl}.
\begin{thm} Let $G$ be a complete Poisson group acting freely and properly on a Poisson manifold $M$. If $M$ is integrable, then the gauge Poisson groupoid $(M \times \overline{M})/G \rightrightarrows M/G$ is integrable by a double symplectic groupoid. \end{thm}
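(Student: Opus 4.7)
The plan is to categorify the \cite{ferigl} quotient construction: apply its technique at the level of the canonical double symplectic groupoid $\Sigma\times\overline{\Sigma}$ integrating the pair Poisson groupoid $M\times\overline{M}\rightrightarrows M$.

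First, I would recall two ingredients. By the completeness of $G$ and the argument of \cite{ferigl}, the Poisson action of $G$ on $M$ lifts canonically to a free, proper action on the source-simply-connected symplectic groupoid $\Sigma=\Sigma(M)$ by symplectic groupoid automorphisms, together with a morphism of Lie groupoids $\mu:\Sigma\to G^{*}$ serving as a group-valued moment map. Independently, $\Sigma\times\overline{\Sigma}$ carries the well-known double symplectic groupoid structure integrating $M\times\overline{M}\rightrightarrows M$: the pair groupoid over $\Sigma$ and the product groupoid over $M\times M$ commute, and $\omega\oplus(-\omega)$ is multiplicative for both.

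Next, I would apply Theorem \ref{main} to the Poisson manifold $M\times\overline{M}$ with the diagonal Poisson action of $G$. This yields the source-simply-connected symplectic groupoid of $(M\times\overline{M})/G$ as $\mathcal{G}(C)/G$, where $C$ is the conormal Lie algebroid of the diagonal $G$-orbits. The key identification, made possible by the moment map, is
\[
\mathcal{G}(C)=\{(x,y)\in\Sigma\times\overline{\Sigma}\,:\,\mu(x)=\mu(y)\}=(\mu\times\mu)^{-1}(\Delta_{G^{*}}).
\]
Since $\mu$ is a morphism of Lie groupoids for both structures on $\Sigma\times\overline{\Sigma}$, this fibered product is closed under each of them, making $\mathcal{G}(C)$ a sub-double-Lie-groupoid with the same sides $\Sigma$ and $M\times M$ as the ambient $\Sigma\times\overline{\Sigma}$. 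The diagonal $G$-action preserves $\mathcal{G}(C)$, is free and proper, and respects both groupoid structures, so the quotient inherits a double Lie groupoid structure
\[
\begin{array}{ccc} \mathcal{G}(C)/G & \rightrightarrows & \Sigma/G \\ \downdownarrows & & \downdownarrows \\ (M\times\overline{M})/G & \rightrightarrows & M/G \end{array}
\]
whose bottom row is the gauge Poisson groupoid.

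The main obstacle I anticipate is showing that the fibered product $(\mu\times\mu)^{-1}(\Delta_{G^{*}})$ is clean with respect to both groupoid structures on $\Sigma\times\overline{\Sigma}$, which is what ensures that $\mathcal{G}(C)$ is a genuine sub-double-Lie-groupoid rather than merely a set-theoretic intersection. This cleanness follows from transversality properties of the moment map that are a consequence of the completeness hypothesis. Once it is established, the restriction of $\omega\oplus(-\omega)$ to $\mathcal{G}(C)$ descends to a non-degenerate symplectic form on $\mathcal{G}(C)/G$ (non-degeneracy being guaranteed by Theorem \ref{main}), and multiplicativity for both structures is inherited from $\Sigma\times\overline{\Sigma}$, completing the double symplectic groupoid picture.
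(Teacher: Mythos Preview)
Your overall architecture matches the paper's: the integrating double symplectic groupoid is indeed the quotient of $(\mu,\mu)^{-1}(G^*_\Delta)\subset \Sigma(M)\times\overline{\Sigma(M)}$ by the diagonal $G$-action, and the two side structures are the ones you describe. But there are genuine gaps in how you justify the construction.

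First, your appeal to Theorem~\ref{main} is misplaced. The diagonal $G$-action on $M\times\overline{M}$ is \emph{not} a Poisson action of the Poisson group $(G,\pi_G)$: the action map fails to be Poisson because the pushforward of $\pi_G$ along the diagonal produces cross terms between the two factors. What is true is that $G_\Delta\hookrightarrow G\times\overline{G}$ is a coisotropic subgroup, so $(M\times\overline{M})/G$ is Poisson by Semenov-Tian-Shansky's coisotropic reduction, not by the q-Poisson mechanism underlying Theorem~\ref{main}. Consequently the hypotheses of Theorem~\ref{main} are not met, and you cannot invoke it to produce the lifted $G$-action on $\mathcal{G}(C)$ or the symplectic form on the quotient. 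The paper handles this step by citing \cite[Thm.~3]{ferigl} directly.

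Second, and relatedly, the lifted $G$-action on $\Sigma(M)$ coming from \cite{ferigl} is \emph{not} by symplectic groupoid automorphisms: it is twisted by the moment map (see the remark immediately following the paper's proof, and \cite[Example~3.12]{ferigl}). So your claim that the diagonal $G$-action ``respects both groupoid structures'' on $\Sigma\times\overline{\Sigma}$ needs care. The paper only asserts that the diagonal action preserves the \emph{pair} groupoid structure $\Sigma\times\overline{\Sigma}\rightrightarrows\Sigma$ (which is obvious), while the groupoid structure over $(M\times\overline{M})/G$ on the quotient is obtained from the coisotropic-reduction machinery of \cite{ferigl}, not from an action by automorphisms of $\Sigma\times\overline{\Sigma}\rightrightarrows M\times M$.

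Finally, the identification $\mathcal{G}(C)=(\mu\times\mu)^{-1}(\Delta_{G^*})$ is unjustified: the fibered product has no reason to be source-simply-connected. The paper simply works with $(\mu,\mu)^{-1}(G^*_\Delta)$ itself, without any such claim. Your anticipated ``cleanness'' obstacle, by contrast, is not an issue: $\mu$ is a submersion because the $\mathfrak{g}$-action is locally free.
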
 
This last result, together with an observation coming from \cite{folpoigro} about the symplectic leaves of Poisson groupoids, can be applied to an interesting family of examples of gauge Poisson groupoids recently introduced by J.-H. Lu and her collaborators thereby producing a number of new examples of symplectic groupoids.
 
%Finally, it is natural to ask whether we can generalize Theorem \ref{main} to situations in which a Lie groupoid action of the sort described above is not available. This question leads us to the study of a special kind of multiplicative foliations on Lie groupoids and their associated monodromy groupoids. Our approach allows us to obtain a version of Theorem \ref{main} adapted to quasi-Poisson manifolds that satisfy a certain completeness property, see Corollary \ref{qpoiinf}. Another consequence of our viewpoint is a new proof of the equivalence between the existence of a complete action of a Lie algebroid on a surjective submersion and its integrability \cite{craruipoi}. The technical tool behind these two results is the following result which may be of independent interest, see Theorem \ref{vac3} below. 
%\begin{thm} Let $\mathcal{G} $ be a vacant double Lie groupoid with sides ${H} ,K $ over $S$. If $H \rightrightarrows S$ is proper with trivial isotropy, then the orbit space $K/\mathcal{G} $ inherits a unique Lie groupoid structure over $S/H$ such that the projection $K \rightarrow K/\mathcal{G} $ is a Lie groupoid morphism. \end{thm}
\subsection*{Acknowledgments.} The author thanks CNPq for the financial support and H. Bursztyn for his continuous advice and support. 

\section{Preliminaries} \subsection{Lie groupoids and Lie algebroids} A {\em smooth groupoid} $G$ over a manifold $M$, denoted $G \rightrightarrows M$, is a groupoid object in the category of not necessarily Hausdorff smooth manifolds such that its source map is a submersion. The structure maps of a groupoid are its source, target, multiplication, unit map and inversion, denoted respectively $\mathtt{s},\mathtt{t},\mathtt{m},\mathtt{u} , \mathtt{i}$. For the sake of brevity, we also denote $\mathtt{m}(a,b)$ by $ab$. A {\em Lie groupoid} is a smooth groupoid such that its base and source-fibers are Hausdorff manifolds, see \cite{moeint,dufzun}. A Lie groupoid is \emph{source-simply-connected} if its source fibres are 1-connected.  

\emph{A left Lie groupoid action} of a Lie groupoid $G \rightrightarrows M$ on a map $J:S \rightarrow M$ is a smooth map $a:G_\mathtt{s} \times_J S \rightarrow S$ such that (1) $a(\mathtt{m} (g,h),x)=a(g,a(h,x))$ for all $g,h\in G$ and for all $x\in S$ for which $a$ and $\mathtt{m}$ are defined and (2) $a(\mathtt{u}(J(x)),x)= x$ for all $x\in M$, the fiber product $G_\mathtt{t} \times_J G $ is denoted by $G \times_M S$. There is a Lie groupoid structure on $G \times_M S$ over $S$ with the projection $\text{pr}_2:G \times_M S \rightarrow S$  being the source map, $a$ being the target map and the multiplication given by $(g,a(h,p))(h,p)=(gh,p)$. The Lie groupoid $G \times_M S\rightrightarrows S$ thus obtained is called an {\em action groupoid}. A Lie groupoid action as before is free if the associated action groupoid has trivial isotropy groups; a Lie groupoid action is proper if the map $(a,\text{pr}_2): G \times_M S \rightarrow S \times S$ is proper. If a Lie groupoid is a Lie group, this recovers the usual notion of free and proper actions. 

A vector bundle $A$ over a manifold $M$ is a {\em Lie algebroid} if there exist: (1) a bundle map $\mathtt{a}:A\rightarrow TM$ called the {\em anchor} and (2) a Lie algebra structure $[\,,\,]$ on $\Gamma(A)$ such that the Leibniz rule holds
\[ [u,fv]=f[u,v]+\left(\mathcal{L}_{\mathtt{a}(u)}f\right)v, \] 
for all $u,v\in \Gamma(A)$ and $f\in C^\infty(M)$. See \cite{higmacalg,vai} for the definition of Lie algebroid morphism.

The {\em Lie algebroid} $A=A_{G}$ of a Lie groupoid $G\rightrightarrows B$ is the vector bundle $A=\ker T\mathtt{s}|_{B}$ endowed with the restriction of $T\mathtt{t}$ to $A$ as the anchor and with the bracket defined by means of right invariant vector fields \cite{moeint,macgen}. A Lie groupoid morphism induces a Lie algebroid morphism between the associated Lie algebroids, this construction defines a functor called the {\em Lie functor} that we denote by $\text{Lie} $. A Lie algebroid  which is isomorphic to the tangent Lie algebroid of a Lie groupoid is called {\em integrable}. If $A$ is an integrable Lie algebroid, we denote by $\mathcal{G} (A)$ its source-simply-connected integration (which is unique up to isomorphism). 

A fundamental result relating Lie groupoids and Lie algebroids is {\em Lie's second theorem}. Let $\phi:A \rightarrow B$ be a Lie algebroid morphism between integrable Lie algebroids. Then, for every Lie groupoid $K$ integrating $B$, there exists a unique Lie groupoid morphism $\Phi:\mathcal{G} (A) \rightarrow K$ such that $\text{Lie} (\Phi)=\phi$ \cite{intinfact,moeint}.
 
%If $\mathfrak{g} $ is a Lie algebra acting on a manifold $M$, we denote by $u_M$ the vector field on $M$ induced by the action of $u\in \mathfrak{g} $ and by $\mathfrak{g}_M\subset TM$ the distribution generated by all such vector fields. An infinitesimal action induces a Lie algebroid structure on the trivial bundle $\mathfrak{g} \times M$ \cite{moeint}. More generally, a Lie algebroid $A$ over $M$ acts on a map $J:S \rightarrow M$ if there is a Lie algebroid morphism $\rho:\Gamma (A) \rightarrow \mathfrak{X}(S)$ such that $X_S:=\rho(X)$ is $q$-related to $\mathtt{a}(X)$ for all $X\in \Gamma (A)$. In this situation, the pullback vector bundle $J^*A$ inherits a Lie algebroid structure over $S$ called the {\em action Lie algebroid structure} \cite{macgen}. Since $\Gamma (J^*A)\cong C^\infty(S)\otimes_{C^\infty (M)}\Gamma (A) $, the Lie bracket on $\Gamma (J^*A)$ is determined by extending the bracket on $\Gamma (A) \hookrightarrow \Gamma (J^*A)$ using the Leibniz rule. The algebroid action $\rho$ of $A$ on $J$ is {\em complete} if $\rho(X)\in \mathfrak{X} (S)$ is a complete vector field whenever $\mathtt{a}(X)\in \mathfrak{X} (M)$ is complete. 	

\subsection{Poisson structures, closed IM 2-forms and symplectic groupoids}\label{subsec:poi} A {\em Poisson structure} on a manifold $M$ is a bivector field $\pi\in \Gamma(\wedge^2TM)$ such that $[\pi,\pi]=0$, where $[\,,\,]$ is the Schouten bracket; with this kind of structure, $(M,\pi)$ is called a \emph{Poisson manifold}. There is a canonical Lie algebroid structure on the cotangent bundle of a Poisson manifold in which the Lie bracket on $\Omega^1(M)$ is given by
\begin{align*}  [\alpha,\beta]=\mathcal{L}_{\pi^\sharp(\alpha)}\beta-\mathcal{L}_{\pi^\sharp(\beta)}\alpha-d\pi(\alpha,\beta),  \end{align*}
for all $\alpha,\beta \in \Omega^1(M)$, and the anchor is the map $\pi^\sharp$ defined by $\alpha \mapsto i_\alpha \pi$; $T^*M$ is usually called the {\em cotangent Lie algebroid} of $M$. A {\em Poisson morphism} $J:(P,\pi_P) \rightarrow (Q,\pi_Q)$ between Poisson manifolds is a smooth map that satisfies $\pi_P \sim_J \pi_Q$.

Let $A$ be a Lie algebroid over $M$. A {\em closed IM 2-form on $A$} \cite{burint} is a vector bundle morphism $\mu:A \rightarrow T^*M$ over the identity such that
\begin{align} &\langle \mu (v ) ,\mathtt{a}( u) \rangle =-\langle \mu (u ),\mathtt{a}(v)   \rangle \label{eq:im21} \\
&\mu([u,v])=\mathcal{L}_{\mathtt{a}  (u )} \mu(v) -i_{\mathtt{a} (v)} d \mu(u) \label{eq:im22}  \end{align}
for all $u,v \in \Gamma (A)$.  In the case of a Poisson manifold $(M,\pi)$, the identity on $T^*M$ is a closed IM 2-form. Closed IM 2-forms are the basic infrastructure necessary for performing Poisson reduction as we shall see below, see \cite{quoim2} for a general discussion. At the Lie groupoid level, a closed IM 2-form induces a {\em closed multiplicative 2-form}: let $G \rightrightarrows M$ be a Lie groupoid and take $\omega \in \Omega^2(G)$, $\omega $ is called multiplicative if $\text{pr}_1^* \omega +\text{pr}_2^* \omega -\text{pr}_3^* \omega $ vanishes on the graph of the multiplication inside $G\times G\times {G}$. A Lie groupoid $G \rightrightarrows B$ is a \emph{symplectic groupoid} \cite{weisygr,karpoi} if it is endowed with a symplectic form which is multiplicative. If the cotangent Lie algebroid of a Poisson manifold $M$ is integrable, we shall say that $M$ is {\em integrable}. If $M$ is an integrable Poisson manifold, we denote by $\Sigma(M) \rightrightarrows M$ its source-simply-connected integration which naturally becomes a symplectic groupoid by integrating its canonical closed IM 2-form \cite{burint}.

\section{Integrability of quotients of quasi-Poisson manifolds}\label{sec:qpoi} The integrability of Poisson manifolds obtained by reduction has been studied in \cite{mikwei,mommap,morla,ferigl}. In this section we put some of the results contained in those works in the broader context of q-Poisson manifolds.
\subsubsection{Poisson structures and quasi-Poisson manifolds} 
\begin{defi}[\cite{roycou}] A {\em Lie quasi-bialgebroid} is a Lie algebroid $A$ over $M$ endowed with a degree one derivation $\delta : \Gamma (\wedge^k A) \rightarrow \Gamma ( \wedge^{k+1} A)$ for all $k\in \mathbb{N} $ which is a derivation of the bracket on $A$, 
\[ \delta([u,v])=[\delta(u),v]+(-1)^{p-1}[u,\delta(v)]\] 
for all $u\in \Gamma(\wedge^p A) $, $v\in \Gamma(\wedge^\bullet A) $ and satisfies $\delta^2=[\chi,\,]$, where $\chi\in \Gamma (\wedge^3 A)$ is such that $\delta(\chi)=0$. \end{defi}

Since $\delta$ is a derivation, it is determined by its restriction to degree 0 and degree 1 where it is given respectively by a vector bundle map $\mathtt{a}_*:A^* \rightarrow TM$ and a map $\Gamma ( A) \rightarrow \Gamma ( \wedge^{2} A)$ called the {\em cobracket}. 

\begin{exa} A {\em Lie bialgebroid} $(A,A^*)$ is a Lie quasi-bialgebroid $(A,\delta,\chi)$ in which $\chi=0$ and hence the differential $\delta$ satisfies $\delta^2=0$ \cite{macxu}. Since the dual of a differential which squares to zero is a Lie bracket, a Lie bialgebroid consists of a pair of Lie algebroid structures on $A$ and $A^*$ which are compatible in a suitable sense. A {\em Lie bialgebra} $(\mathfrak{g},\mathfrak{g}^*)$ is a Lie bialgebroid over a point \cite{driham}. \end{exa}
   
\begin{defi}[\cite{quapoi}]\label{def:qpoi} A {\em quasi-Poisson manifold} (or a {\em Hamiltonian space}) for a Lie quasi-bialgebroid $(A,\delta,\chi)$ on $M$ is given by an action $\rho:J^*A \rightarrow TS$ of $A$ on a smooth map $J:S \rightarrow M$ and a bivector field $\pi$ on $S$ such that:
 \begin{align*} & \frac{1}{2}[\pi,\pi]=\rho(\chi) \\
	&  \mathcal{L}_{\rho(U)}\pi=\rho(\delta(U)), \quad \forall U\in \Gamma (A), \\
&\pi^\sharp J^*=\rho\circ \mathtt{a}^*_*, \end{align*}
			where $\mathtt{a}_*:A^* \rightarrow TS $ is the component of $\delta$ in degree zero as before. \end{defi} 
\begin{exa} An infinitesimal Poisson action $\rho:\mathfrak{g} \rightarrow TS$ of the tangent Lie bialgebra $(\mathfrak{g} ,\mathfrak{g}^*)$ of a Poisson group \cite{driham,luphd} can be expressed by saying that $(S,\pi,\rho)$ is a Hamiltonian space for $(\mathfrak{g},\delta,0)$, where $\delta$ is the differential dual to the bracket on $\mathfrak{g}^*$. \end{exa}  
\begin{exa} The original q-Poisson manifolds, which were introduced in \cite{alemeikos}, are a special case of Definition \ref{def:qpoi}. Consider a Lie algebra $\mathfrak{g}$ endowed with an Ad-invariant symmetric nondegenerate bilinear form $B$. Then there is a Lie quasi-bialgebra structure $(\mathfrak{g} ,\delta, \chi) $ on $\mathfrak{g} $ which depends on $B$ and is determined by the splitting of $\mathfrak{g} \oplus \mathfrak{g}$ as the sum of the diagonal Lie subalgebra and the anti-diagonal \cite{alekos,alemeikos}. The q-Poisson manifolds corresponding to $(\mathfrak{g} ,\delta, \chi) $ shall be called {\em q-Poisson $\mathfrak{g}$-manifolds} in accordance to \cite{alemeikos,liblasev}. \end{exa} 
The main feature of this notion is the following well known reduction construction. Let $(S,\pi)$ be a quasi-Poisson manifold for a Lie quasi-bialgebroid $(A ,\delta, \chi) $. If the $A$-action induces a simple foliation on $S$, then its leaf space $\overline{S} $ inherits a unique Poisson structure $\overline{\pi}$ such that $\pi$ and $\overline{\pi}$ are $q$-related, where $q:S \rightarrow \overline{S}$ is the projection. In fact, the cotangent Lie algebroid of $\overline{S}$ can be described as the quotient of a Lie algebroid over $S$.     
\begin{prop}\label{pro:con} If the $A$-action induces a regular foliation on $S$, then the conormal bundle $C$ of the $A$-orbits is a Lie algebroid with the anchor defined by $\alpha \mapsto \pi^\sharp(\alpha )$ and the Lie bracket 
\begin{align} [\alpha ,\beta ]_C:=\mathcal{L}_{\pi^\sharp(\alpha) }\beta -i_{\pi^\sharp(\beta )}d\alpha, \label{eq:bra} \end{align} 
for all $\alpha ,\beta \in \Gamma (C)$. \end{prop}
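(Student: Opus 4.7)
The plan is to verify the Lie algebroid axioms directly from the bracket formula (\ref{eq:bra}). First, Cartan calculus rewrites (\ref{eq:bra}) in the manifestly skew form
\begin{equation*}
[\alpha,\beta]_C = d\pi(\alpha,\beta) + i_{\pi^\sharp(\alpha)} d\beta - i_{\pi^\sharp(\beta)} d\alpha,
\end{equation*}
which is the standard Koszul bracket attached to the bivector $\pi$. In this form skew-symmetry is manifest and the Leibniz rule with anchor $\pi^\sharp$ is a short computation valid for any bivector. So the two substantive tasks are (i) showing that the bracket sends $\Gamma(C) \times \Gamma(C)$ into $\Gamma(C)$, and (ii) establishing the Jacobi identity on $\Gamma(C)$.

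For (i), I would pair $[\alpha,\beta]_C$ with $\rho(U)$ for arbitrary $\alpha,\beta \in \Gamma(C)$ and $U \in \Gamma(J^*A)$ and show the result vanishes, using three ingredients from the q-Poisson data: involutivity of the orbit distribution $\rho(J^*A)$, which implies that $\mathcal{L}_{\rho(U)}$ preserves $\Gamma(C)$; the identity $\mathcal{L}_{\rho(U)}\pi = \rho(\delta U)$ from Definition \ref{def:qpoi}; and the fact that every ``leg'' of the bivector $\rho(\delta U)$ lies in the orbit distribution, so that both $\rho(\delta U)^\sharp(\alpha) = 0$ and $\rho(\delta U)(\alpha,\beta) = 0$ whenever $\alpha,\beta \in \Gamma(C)$. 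Expanding the two summands defining $[\alpha,\beta]_C$ via Cartan's formulas, the first two produce a term $\pi(\mathcal{L}_{\rho(U)}\alpha,\beta)$, while the third produces the same term with the opposite sign, yielding an exact cancellation.

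For (ii), I would appeal to the general fact that the Jacobiator of the Koszul bracket of an \emph{arbitrary} bivector $\pi$ is a $C^\infty$-trilinear expression whose sole source is the Schouten bracket $[\pi,\pi]$; when $\pi$ is Poisson it vanishes identically. The q-Poisson condition $\tfrac12[\pi,\pi] = \rho(\chi)$ expresses this obstruction as a trivector all of whose legs are $\rho$-images, so any multilinear contraction of $\rho(\chi)$ with three sections of $C$ is zero, and Jacobi survives on $\Gamma(C)$. The main obstacle is putting the Jacobiator into a form that makes this cancellation transparent—either by invoking an existing twisted-Poisson identity, or by expanding the triple bracket directly and reusing the Cartan-calculus identities from step (i) to absorb the correction terms one by one.
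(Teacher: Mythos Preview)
Your proposal is correct and follows essentially the same route as the paper: closure of $\Gamma(C)$ is checked by pairing with $\rho(U)$ and exploiting $\mathcal{L}_{\rho(U)}\pi=\rho(\delta U)$, and the Jacobi identity comes from the fact that $\tfrac12[\pi,\pi]=\rho(\chi)$ has all legs tangent to the orbits, so every contraction with a section of $C$ vanishes.

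Two small points of comparison. The paper organizes the closure computation around the intermediate identity $\mathcal{L}_{\rho(U)}\pi^\sharp(\alpha)=\pi^\sharp(\mathcal{L}_{\rho(U)}\alpha)$ for $\alpha\in\Gamma(C)$, which streamlines the cancellations you sketch. For Jacobi, the paper does not invoke a general Jacobiator formula but instead checks directly that the anchor is bracket-preserving on $C$, via $\pi^\sharp[\alpha,\beta]_C=[\pi^\sharp\alpha,\pi^\sharp\beta]-i_{\alpha\wedge\beta}\rho(\chi)=[\pi^\sharp\alpha,\pi^\sharp\beta]$, and then declares that Jacobi follows. Your statement that the Koszul Jacobiator is $C^\infty$-trilinear for an \emph{arbitrary} bivector is not quite right: its non-tensorial part is precisely $\big([\pi^\sharp\alpha,\pi^\sharp\beta]-\pi^\sharp[\alpha,\beta]_\pi\big)(f)\,\gamma$, again controlled by $[\pi,\pi]$. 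On $\Gamma(C)$ this vanishes (as the paper shows), so the Jacobiator \emph{is} tensorial there, and then the remaining terms---which also involve contractions of $\rho(\chi)$ with conormal forms---vanish as you say. So your outline and the paper's are the same argument viewed from slightly different angles.
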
 
\begin{proof} Take $U\in \Gamma (B)$, $ \alpha \in \Gamma (C)$ and let $\beta \in \Omega^1(S)$ be arbitrary. Then we have that 
\[ \langle \beta ,\mathcal{L}_{\rho(U)}(\pi^\sharp(\alpha )) \rangle =\mathcal{L}_{\rho(U)}\langle \beta , \pi^\sharp (\alpha )\rangle - \langle \mathcal{L}_{\rho(U)} \beta , \pi^\sharp(\alpha ) \rangle =\langle \rho(\delta(U)),\alpha \wedge \beta \rangle + \langle \beta ,\pi^\sharp(\mathcal{L}_{\rho(U)} \alpha ) \rangle,  \] 
where we used the identity $\mathcal{L}_{\rho(U)} \pi= \rho (\delta(U))$ and the fact that
\[ \mathcal{L}_{\rho(U)} (\pi( \alpha , \beta ))=(\mathcal{L}_{\rho(U)} \pi)(\alpha , \beta )+\pi (\mathcal{L}_{\rho(U)} \alpha , \beta )+\pi ( \alpha ,\mathcal{L}_{\rho(U)} \beta ). \]
But $\langle \rho(\delta(U)),\alpha \wedge \beta \rangle=0$ since $\alpha $ lies in the annihilator of $\rho(\Gamma (A))$. As a consequence, 
\begin{align}  \mathcal{L}_{\rho(U)}\pi^\sharp(\alpha )=\pi^\sharp(\mathcal{L}_{\rho(U)}\alpha ). \label{eq:infact0} \end{align} 
Now we shall check that $[\alpha ,\beta ]_C\in \Gamma (C)$. Take $U\in \Gamma (A)$, we have that 
\[ \langle \mathcal{L}_{\pi^\sharp(\alpha) }\beta , \rho(U) \rangle =\langle d ( i_{\pi^\sharp(\alpha) } \beta) + i_{\pi^\sharp(\alpha) } d \beta ,\rho(U) \rangle =\langle \beta ,[\rho(U),\pi^\sharp(\alpha )]\rangle=-\langle \mathcal{L}_{\rho(U)} \alpha ,\pi^\sharp(\beta ) \rangle,  \]
where we used \eqref{eq:infact0} in the last equality. On the other hand, 
\begin{align*} &\langle i_{\pi^\sharp(\beta )}d\alpha,\rho(U) \rangle =d \alpha (\pi^\sharp(\beta ),\rho(U))=- \mathcal{L}_{\rho(U)}\langle \alpha ,\pi^\sharp(\beta ) \rangle +\langle \alpha ,\mathcal{L}_{\rho(U)} \pi^\sharp(\beta ) \rangle. \end{align*}
By combining the last two equations we get that
\[ \langle [\alpha, \beta ]_C,\rho(U) \rangle =\langle \alpha, \mathcal{L}_{\rho(U)}(\pi^\sharp)(\beta ) \rangle=(\mathcal{L}_{\rho(U)}\pi)(\alpha ,\beta ) \]
and this last term is zero because $\mathcal{L}_{\rho(U)}\pi=\rho(\delta(U))$ is generated by vectors tangent to the $A$-orbits. Finally, 
\[ \pi^\sharp([\alpha ,\beta ]_C)=[\pi^\sharp (\alpha ), \pi^\sharp(\beta )]-\frac{1}{2} i_{\alpha \wedge \beta }[\pi,\pi]=[\pi^\sharp (\alpha ), \pi^\sharp(\beta )]- i_{\alpha \wedge \beta }\rho(\chi)=[\pi^\sharp (\alpha ), \pi^\sharp(\beta )]. \]
So it follows that $[\,,\,]_C$ satisfies the Jacobi identity and hence it endows $C$ with a Lie algebroid structure. \end{proof} 
\begin{rema} As we shall see next, the integrability of the quotient Poisson structure is controlled by the integrability of this Lie algebroid structure on $C$. It is immediate that the inclusion $C \hookrightarrow T^*S$ is a closed IM 2-form on $C$, so we see that quasi-Poisson reduction fits within the general framework of Poisson reduction described in \cite{quoim2}. \end{rema}  
A q-Poisson manifold $(S,\pi)$ for a Lie quasi-bialgebroid $(A,\delta,\chi)$ comes naturally equipped with a canonical action of $A$ on $C$ as in Proposition \ref{pro:con} in the following sense. 
  Let $J:S \rightarrow M$ be a surjective submersion, let $A$ be a Lie algebroid over $M$ and let $C$ be a Lie algebroid over $S$ such that $Tq\circ \mathtt{a}=0$, where $\mathtt{a}$ is the anchor of $C$. 
\begin{defi}[\cite{higmacalg,intinfact}]\label{def:infact} An {\em action of $A$ on a Lie algebroid $C$ over $S$} is an $A$-action on $J$ and a Lie algebra morphism $\Gamma (A) \rightarrow \text{Der}(C)$ which is $C^\infty(M)$-linear, where $\text{Der}(C)$ is the space of derivations of $C$. \end{defi}   
 \begin{lem}\label{leminfact} Let $(S,\pi)$ be a Hamiltonian space for a Lie quasi-bialgebroid $(A,\delta,\chi)$ on $M$ with moment map $J:S \rightarrow M$. If $J$ is a surjective submersion, then the map given by $U\mapsto \mathcal{L}_{\rho(U)}$ for all $U\in \Gamma (A)$ defines an infinitesimal action of $A$ on $C$. \end{lem}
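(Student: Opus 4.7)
The plan is to verify the four conditions of Definition~\ref{def:infact}: since the $A$-action on $J$ is part of the Hamiltonian space data, what remains is to check that (a) the anchor $\pi^\sharp|_C$ of $C$ factors through $\ker TJ$, (b) each $\mathcal{L}_{\rho(U)}$ maps $\Gamma(C)$ into itself, (c) each $\mathcal{L}_{\rho(U)}$ is a derivation of $[\,,\,]_C$, and (d) the assignment $U \mapsto \mathcal{L}_{\rho(U)}$ is a $C^\infty(M)$-linear Lie algebra morphism $\Gamma(A) \to \text{Der}(C)$.

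Items (a), (b) and (d) are essentially formal. For (a), the moment map condition $\pi^\sharp \circ J^* = \rho \circ \mathtt{a}_*^*$ gives $\langle \pi^\sharp(\alpha), J^*\eta \rangle = -\langle \rho(\mathtt{a}_*^*\eta), \alpha \rangle = 0$ for every $\alpha \in \Gamma(C)$ and $\eta \in \Omega^1(M)$, so $TJ \circ \pi^\sharp|_C = 0$. For (b), I would expand
\[ \langle \mathcal{L}_{\rho(U)}\alpha, \rho(V) \rangle = \mathcal{L}_{\rho(U)} \langle \alpha, \rho(V) \rangle - \langle \alpha, [\rho(U), \rho(V)] \rangle \]
and observe that both terms vanish because $\alpha$ annihilates $\rho(\Gamma(A))$ and $[\rho(U), \rho(V)] = \rho([U,V])$ by the action property. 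For (d), the Lie algebra morphism property is again immediate from $[\rho(U), \rho(V)] = \rho([U,V])$; and the $C^\infty(M)$-linearity follows from the Leibniz rule $\mathcal{L}_{fX}\alpha = f\mathcal{L}_X \alpha + (i_X \alpha)\, df$ combined with $i_{\rho(U)} \alpha = 0$ for $\alpha \in \Gamma(C)$.

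The main computation is (c). I would write $[\alpha,\beta]_C = \mathcal{L}_{\pi^\sharp(\alpha)}\beta - i_{\pi^\sharp(\beta)} d\alpha$ and apply $\mathcal{L}_{\rho(U)}$ using the graded commutators $[\mathcal{L}_X, \mathcal{L}_Y] = \mathcal{L}_{[X,Y]}$, $[\mathcal{L}_X, i_Y] = i_{[X,Y]}$, and $[\mathcal{L}_X, d] = 0$. The essential input is identity~\eqref{eq:infact0}, already established in the proof of Proposition~\ref{pro:con}, which yields $[\rho(U), \pi^\sharp(\alpha)] = \pi^\sharp(\mathcal{L}_{\rho(U)}\alpha)$ for $\alpha \in \Gamma(C)$. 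Substituting this into the expansion of $\mathcal{L}_{\rho(U)}[\alpha,\beta]_C$ collapses it exactly to $[\mathcal{L}_{\rho(U)}\alpha, \beta]_C + [\alpha, \mathcal{L}_{\rho(U)}\beta]_C$. Since \eqref{eq:infact0} is available, the only obstacle is bookkeeping in the Cartan calculus; there is no deeper geometric content beyond what Proposition~\ref{pro:con} already required.
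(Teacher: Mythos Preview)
Your proposal is correct and follows essentially the same approach as the paper: both rely on identity~\eqref{eq:infact0} from Proposition~\ref{pro:con} for the derivation property of the bracket, on the moment map condition $\pi^\sharp J^* = \rho \circ \mathtt{a}_*^*$ for $TJ \circ \pi^\sharp|_C = 0$, and on Cartan's formula for $C^\infty(M)$-linearity. You are simply more explicit than the paper in separating out the checks (b) and the Lie algebra morphism part of (d), which the paper leaves implicit.
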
  
\begin{proof} First of all, equation \eqref{eq:infact0} implies that $\mathcal{L}_{\rho(U)}[\alpha ,\beta ]_C =[ \mathcal{L}_{\rho(U)}\alpha ,\beta ]_C+[\alpha ,\mathcal{L}_{\rho(U)} ,\beta ]_C$. On the other hand, the equation $\pi^\sharp J^*=\rho\circ \mathtt{a}^*_*$ implies that $TJ\circ \pi^\sharp:C \rightarrow TM$ is the zero map. Since $\mathcal{L}_{f\rho(U)} \alpha =f \mathcal{L}_{\rho(U)} \alpha $ by Cartan's formula, the map $\psi$ is $C^\infty(M)$-linear and so we are done. \end{proof}
We are only interested in the situation in which the previous infinitesimal action of $A$ on $C$ is integrable by a global action of the following kind. Let $G \rightrightarrows M$ be a Lie groupoid acting on a surjective submersion $J: S \rightarrow M$ and let $C$ be a Lie algebroid over $S$ such that $TJ\circ \mathtt{a}=0 $. In this situation we have an action of $C$ on the projection $G \times_M S \rightarrow S$ given by $X \mapsto (0,\mathtt{a}(X))$ for all $X\in \Gamma (C)$. Hence, there is an action Lie algebroid structure on $G \times_M C$ over $G \times_M S$. Let $p:C \rightarrow S$ be the vector bundle projection.
\begin{defi}[\cite{intinfact}]\label{def:infgroact} {\em An action of $G \rightrightarrows M$ on $C$} is a Lie groupoid action of $G$ on $J\circ p:C \rightarrow S$ such that the structure maps of the action groupoid $G \times_M C \rightrightarrows C$ are Lie algebroid morphisms over the structure maps of the action groupoid $G \times_M S \rightrightarrows S$. \end{defi}
\subsubsection{The integrability criterion} Let $(S,\pi)$ be a q-Poisson manifold for a Lie quasi-bialgebroid $(A,\delta,\chi)$ such that the moment map $J:S \rightarrow M$ is a surjective submersion and the $A$-action on $S$ induces a regular foliation. Suppose that the $A$-action on $C$ integrates to a $G$-action as in Definition \ref{def:infgroact} and that the $G$-action is free and proper so that the quotient $S/G$ is a smooth manifold and it inherits a Poisson structure $\sigma$ with the property that $Tq\circ \pi^\sharp\circ q^*=\sigma^\sharp$, where $q:S \rightarrow S/G$ is the quotient map.

\begin{thm}\label{thm:main} The Poisson manifold $(S/G,\sigma)$ is integrable if and only if the Lie algebroid $C$ is integrable. Moreover, if $\mathcal{G} (C)$ is the source-simply-connected integration of $C$, then there is a lifted $G$-action on $\mathcal{G} (C)$ such that the orbit space $\mathcal{G} (C)/G $ is a symplectic groupoid integrating $(S/G,\sigma)$. \end{thm}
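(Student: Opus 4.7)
The plan is to identify the cotangent Lie algebroid of $(S/G,\sigma)$ with the quotient of $C$ by the $G$-action, and then to invoke the results on integrability of Lie groupoid actions on Lie algebroids from \cite{intinfact} to transport integrations across this quotient.

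First I would verify that there is a canonical isomorphism of Lie algebroids $C/G\cong T^*(S/G)$ over $S/G$. Because $q:S\to S/G$ is a surjective submersion whose vertical bundle is spanned by the image of $\rho$, the conormal bundle $C$ is canonically identified with the pullback $q^*T^*(S/G)$ as a vector bundle. The bracket \eqref{eq:bra} of Proposition \ref{pro:con}, combined with the defining identity $Tq\circ \pi^\sharp\circ q^*=\sigma^\sharp$, then shows that this pullback identification intertwines the bracket on $C$ with the cotangent Lie algebroid bracket on $T^*(S/G)$, with fibres of the quotient map corresponding precisely to $G$-orbits in $C$; hence $C/G\cong T^*(S/G)$ as Lie algebroids.

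Given this identification, the ``if'' direction is the core construction. Applying Lie's second theorem to each of the structure maps of the action Lie algebroid $G\times_M C\rightrightarrows C$, which are Lie algebroid morphisms by Definition \ref{def:infgroact}, one lifts them uniquely to Lie groupoid morphisms on $\mathcal{G}(C)$ that assemble into a $G$-action on $\mathcal{G}(C)$ by Lie groupoid automorphisms; the results of \cite{intinfact} ensure this lifted action remains free and proper. The orbit space $\mathcal{G}(C)/G$ is then a Lie groupoid integrating $C/G\cong T^*(S/G)$. To upgrade it to a symplectic groupoid I would use the closed IM 2-form $C\hookrightarrow T^*S$ noted after Proposition \ref{pro:con}: by \cite{burint} it integrates to a closed multiplicative 2-form on $\mathcal{G}(C)$. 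Since the $G$-action on $C$ is the restriction of the natural $G$-action on $T^*S$, it preserves the IM 2-form, and the resulting multiplicative 2-form on $\mathcal{G}(C)$ descends to a multiplicative 2-form on $\mathcal{G}(C)/G$ integrating the identity IM 2-form on $T^*(S/G)$; this form is therefore symplectic. For the converse, $C$ is the pullback Lie algebroid of $T^*(S/G)$ along the surjective submersion $q$, so any integration of $T^*(S/G)$ pulls back, via the action groupoid of $G$, to an integration of $C$.

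The main obstacle I anticipate is the lifting step: checking that the infinitesimal action of $A$ on $C$ from Lemma \ref{leminfact} integrates, via the global $G$-action on $C$ and then via Lie's second theorem, to an action on $\mathcal{G}(C)$ that is free and proper with a smooth quotient. This is precisely what \cite{intinfact} provides, so the technical heart of the argument reduces to verifying that our quasi-Poisson setup fits the hypotheses of that result and that the descent of the multiplicative 2-form is compatible with the quotient.
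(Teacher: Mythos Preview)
Your strategy matches the paper's in its broad outline: both rely on \cite{intinfact} to lift the $G$-action to $\mathcal{G}(C)$ and on integrating the closed IM 2-form $C\hookrightarrow T^*S$ to a multiplicative 2-form $\omega$ on $\mathcal{G}(C)$. Your initial identification $C/G\cong T^*(S/G)$ is a clean organising principle that the paper leaves implicit (cf.\ Remark~\ref{rem:source-simply-connected}), and it lets you infer symplecticity of the reduced form from the fact that it integrates the identity IM 2-form rather than by a direct kernel computation.

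There is, however, a genuine gap in your descent step. Saying that the $G$-action preserves the IM 2-form yields $G$-\emph{invariance} of $\omega$, but for $\omega$ to descend to the quotient you also need $i_V\omega=0$ for every $V$ tangent to a $G$-orbit. The paper spends the bulk of its argument (its Steps~4 and~5) on exactly this: it checks at the level of the linear 2-form $\Lambda_\mu=\mu^*\omega_{\text{can}}$ that the infinitesimal generators $U'$ satisfy $i_{U'}\Lambda_\mu=0$, then uses multiplicativity together with injectivity of $\mu$ to show that $\ker\omega$ coincides with the orbit distribution, so that the reduced form is both well-defined and nondegenerate. Your route can be salvaged by a uniqueness argument: once you know $\mathcal{G}(C)/G$ is source-simply-connected (this needs justification---the $G$-action never identifies two points in the same $\mathtt{s}$-fibre) and integrates $T^*(S/G)$, the identity IM 2-form integrates to a unique multiplicative $\bar\omega$ on $\mathcal{G}(C)/G$; pulling $\bar\omega$ back to $\mathcal{G}(C)$ and invoking uniqueness there forces it to equal $\omega$, which shows a posteriori that $\omega$ was basic. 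But as written the descent is asserted, not proved.

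For the converse, your phrase ``$C$ is the pullback Lie algebroid of $T^*(S/G)$'' is not accurate: the pullback Lie algebroid $q^{!}T^*(S/G)$ has rank $\dim S$, not $\dim(S/G)=\rank C$. The paper instead identifies $q^{!}T^*(S/G)$ with the pullback Dirac structure $L$ of $\sigma$ along $q$, notes that $L$ is integrable whenever $(S/G,\sigma)$ is by \cite{burint}, and then observes that $C$ sits inside $L$ as a Lie subalgebroid, hence is integrable by \cite{moeint}. Your vector-bundle identification $C\cong q^*T^*(S/G)$ is correct, but it does not by itself carry a pullback-groupoid construction that integrates $C$.
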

\begin{rema} The previous result generalizes \cite[Thm 3.4.4]{morla}, which regards only Poisson groupoid actions. \end{rema} 
The $G$-action on $\mathcal{G} (C)$ as in the previous theorem is compatible with the groupoid structure in the following sense. 

Let $K \rightrightarrows S$ and $G \rightrightarrows M$ be Lie groupoids and let $J:S \rightarrow M$ be a surjective submersion such that $J\circ\mathtt{s}=J\circ \mathtt{t}$.
\begin{defi}[\cite{higmacalg}]\label{def:glogroact} An action of $G \rightrightarrows M$ on $K \rightrightarrows S$ is an action on the map $J\circ\mathtt{s}=J\circ \mathtt{t}:K \rightarrow M$ which is a Lie groupoid action $G \times_M K \rightarrow K$ such that it is a Lie groupoid morphism with respect to the fiber product groupoid $G \times_M K \rightrightarrows G \times_M S$. \end{defi}
Notice that when $M$ is a point, a $G$-action in the previous sense is a $G$-action by automorphisms on $K$.

\begin{proof}[Proof of Theorem \ref{thm:main}] Suppose that $C$ is integrable. 

{\em Step 1: lift of the $G$-action to $\mathcal{G} (C)$.} First of all, \cite[Thm. 3.6]{intinfact} implies that the $G$-action on $C$ lifts to a $G$-action on $\mathcal{G} (C)$. On the other hand, this lifted action is principal since it is principal on the base and hence the quotient $\mathcal{G} (C)/G$ inherits a unique Lie groupoid structure such that the projection map $\mathcal{G} (C) \rightarrow \mathcal{G} (C)/G$ is a Lie groupoid morphism  \cite[Lemma 2.1]{intinfact}. 

{\em Step 2: existence of a canonical multiplicative 2-form on $\mathcal{G} (C)$.} Recall that the inclusion $\mu: C \hookrightarrow T^*S$ constitutes a closed IM 2-form on $C$. Let $\lambda$ be the canonical 1-form on $T^*S$ and let us denote $\Lambda_\mu=d(\mu^*\lambda)\in \Omega^2(C)$. Then $\Lambda_\mu^\flat:TC \rightarrow T^*C$ is a Lie algebroid morphism which lifts to a Lie groupoid morphism $\omega^\flat:T \mathcal{G} (C) \rightarrow T^* \mathcal{G} (C)$ with the property that $\omega $ is a multiplicative closed 2-form on $\mathcal{G} (C)$ \cite{burcab}. 

{\em Step 3: reduction of $\omega $ to a symplectic form on $\mathcal{G} (C)/G$}. The lifted $G$-action on $\mathcal{G} (C)$ is obtained as the integration $\widetilde{\alpha }: G \times_M \mathcal{G} (C) \rightarrow \mathcal{G} (C)$ of the Lie algebroid morphism $\alpha $, see the proof of \cite[Thm. 3.6]{intinfact}. Since $\widetilde{\alpha }$ is also an action, $\Gamma:=G \times_M \mathcal{G} (C) \rightrightarrows \mathcal{G} (C) $ inherits an action groupoid structure, where $\widetilde{\alpha }$ is its target map and the projection $\text{pr}_2:G \times_M \mathcal{G} (C) \rightarrow \mathcal{G} (C)$ is its source. In order to prove that $\omega $ descends to a symplectic form on $\mathcal{G}(C)/G$, we have to check that the $G$-orbits are tangent to $\ker \omega $ and that $\mathtt{t}_{\Gamma }^* \omega =   \widetilde{\alpha }^* \omega =\mathtt{s}_{\Gamma }^* \omega = \text{pr}_2^* \omega $. 

{\em Step 4: the $G$-orbits are tangent to $\ker \omega $}. Infinitesimally, the $A$-action on $C$ lifts to $\mathcal{G} (C)$ as follows. Take $U\in \Gamma (A)$ of compact support. Since $\mathcal{L}_{\rho(U)}$ is a derivation of $C$ over $\rho(U)$, it integrates to a 1-parameter family of automorphisms $\psi_t$ of $C$. Lie's second theorem implies that $\psi_t$ lifts to a family of Lie groupoid automorphisms $\Psi_t$ of $\mathcal{G} (C)$. The infinitesimal generator of $\Psi_t$ is a multiplicative vector field $\widetilde{U}\in \mathfrak{X} (\mathcal{G} (C))$, i.e. a vector field which is a Lie groupoid morphism $\widetilde{U}:\mathcal{G} (C) \rightarrow T \mathcal{G} (C)$. Since $i_{\widetilde{U} }\omega :T \mathcal{G} (C) \rightarrow \mathbb{R} $ is a Lie groupoid morphism, in order to prove that $i_{\widetilde{U} }\omega=0$ we just have to check that its associated Lie algebroid morphism is zero. Let us denote by $\mathcal{B}$ the distribution tangent to the $A$-orbits, by construction, it is generated by the vector fields of the form $\widetilde{U}$.   

The Lie algebroid morphism $ \overline{\Lambda}_\mu: T C\oplus_{C} TC \rightarrow \mathbb{R}$ associated to $\omega: T \mathcal{G} (C_L)\oplus T \mathcal{G} (C) \rightarrow \mathbb{R}$ is defined by the linear 2-form $\Lambda_\mu=\mu^* \omega_{\text{can}}$, where $\omega_{\text{can}}$ is the canonical symplectic form on $T^*S$ \cite{burcab}. On the other hand, we have that $\widetilde{U} $ induces a Lie algebroid morphism $U'=C \rightarrow TC$ in the following way
\[ U'(\alpha_p )=T_p \alpha (U_p) - \overline{\left(\mathcal{L}_{\rho(U)} \alpha  \right)}_p \quad \forall p\in S, \]
where $\alpha \in \Gamma (C)$ and $\overline{\mathcal{L}_{\rho(U)} \alpha}_p$ is the vertical tangent vector to $C$ at $\alpha_p $ associated to $\left(\mathcal{L}_{\rho(U)} \alpha\right)_p$ \cite{intinfact}. So we have immediately that $i_{U'} \Lambda_{\mu}=0$ and hence $i_{\widetilde{U}} \omega =0$ which proves $\mathcal{B} \subset \ker \omega $.
   
On the other hand, we have that $\ker \omega_{\mathtt{t}(g)}\cap \ker T_{\mathtt{t}(g)}  \mathtt{s} =0$ since $\mu$ is injective and hence
\begin{align}  \ker \omega_{g}\cap \ker T_{g} \mathtt{s}\cong\ker \omega_{\mathtt{t}(g)}\cap \ker T_{\mathtt{t}(g)}  \mathtt{s} =0 \label{eq:ker2for} \end{align}  
for all $g\in \mathcal{G} (C)$, \cite[Lemma 3.1]{burint}. If $V\in \ker \omega_g$ and $W\in T_{\mathtt{s}(g)} G$, then for (any) $X\in T_g \mathcal{G} (C)$ composable with $W$ we have that
\[\omega_{\mathtt{s}(g)}(T \mathtt{s}(V),W)=\omega_g(V, T \mathtt{m}(W,X))-\omega_g(V,X)=0 \]   
	and so $T_g \mathtt{s}(v)\in \ker \omega_{\mathtt{s}(g)}$. But $T_g \mathtt{s}$ restricted to $\ker \omega_g$ is injective by \eqref{eq:ker2for}, so $\dim \ker \omega_g\leq \dim \ker \omega_{\mathtt{s}(g)} =\dim B=\dim \mathcal{B} $. Therefore, $\mathcal{B}=\ker \omega $. 

{\em Step 5: $\mathtt{t}_{\Gamma }^* \omega - \mathtt{s}_{\Gamma }^* \omega =0$}. We have to show that the linear 2-form corresponding to the multiplicative 2-form $\mathtt{t}_{\Gamma }^* \omega - \mathtt{s}_{\Gamma }^* \omega $ on $\Gamma $ vanishes \cite{burcab}. But this linear 2-form is nothing but $d(\alpha^* \lambda- \text{pr}_2^* \lambda)\in \Omega^2(G \times_M C)$ and we have that $\alpha^* \lambda- \text{pr}_2^* \lambda=0$. Therefore, $\omega $ is $\Gamma $-basic and it descends to a multiplicative symplectic form on the quotient $\mathcal{G} (C)/G$. 

Finally, if $(S/G,\sigma)$ is integrable, then its pullback Dirac structure $L$ along the projection map $S \rightarrow S/G$ is integrable \cite{burint}; $C$ can be identified with a Lie subalgebroid of $L$ and hence it is also integrable \cite{moeint}. \end{proof}
\begin{rema}\label{rem:source-simply-connected} A closer look at the $G$-action on $\mathcal{G} (C)$ reveals that it does not identify points on the same $\mathtt{s}$-fiber, therefore, $\mathcal{G} (C)/G$ is also source-simply-connected and so the fact that it is symplectic also follows from the integration of Lie bialgebroids in \cite{macxu2}. \end{rema}   
\subsubsection{Integrability of quotients of Poisson actions} A Lie group is a {\em Poisson group} if it is endowed with a Poisson structure such that the multiplication map is a Poisson morphism \cite{driham}. A {\em Poisson action} of a Poisson group on a Poisson manifold is a Lie group action which is a Poisson morphism \cite{semdres}.
\begin{coro}\label{crl:coired} Let $G$ be a Poisson group and suppose that there is a Poisson $G$-action on a Poisson manifold $S$. Suppose $G$ acts freely and properly on $S$. If $S$ is integrable, then the induced Poisson structure on the quotient $S/G$ is integrable. \end{coro}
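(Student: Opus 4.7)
The plan is to apply Theorem \ref{thm:main} to the Poisson action at hand, viewing $(\mathfrak{g},\mathfrak{g}^*)$ as a Lie quasi-bialgebroid over a point. Per the example following Definition \ref{def:qpoi}, the Poisson $G$-action makes $(S,\pi)$ a Hamiltonian space for $(\mathfrak{g},\delta,0)$ with moment map the constant map $J:S\to \{\mathrm{pt}\}$, which is trivially a surjective submersion. Since $G$ acts freely, the orbit foliation on $S$ is regular, so Proposition \ref{pro:con} supplies the conormal Lie algebroid $C\to S$.

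The first observation needed is that $C$ is in fact a Lie subalgebroid of the cotangent Lie algebroid $T^*S$. This is a short computation: using Cartan's formula, $\mathcal{L}_{\pi^\sharp(\beta)}\alpha=i_{\pi^\sharp(\beta)}d\alpha+d\pi(\beta,\alpha)$, and combining with the antisymmetry of $\pi$, the cotangent bracket
\[
\mathcal{L}_{\pi^\sharp(\alpha)}\beta-\mathcal{L}_{\pi^\sharp(\beta)}\alpha-d\pi(\alpha,\beta)
\]
coincides with $\mathcal{L}_{\pi^\sharp(\alpha)}\beta-i_{\pi^\sharp(\beta)}d\alpha$, i.e.\ with \eqref{eq:bra}; and the anchor of $C$ is simply the restriction of $\pi^\sharp$.

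Next, I would integrate the infinitesimal action of Lemma \ref{leminfact} to a global $G$-action on $C$ in the sense of Definition \ref{def:infgroact}. For this, note that the cotangent lift of the $G$-action on $S$ acts on $T^*S$ by Lie algebroid automorphisms precisely because $G$ acts on $S$ by Poisson diffeomorphisms; as $G$ permutes its own orbits, this cotangent lift preserves $C$ and therefore restricts to an action of $G$ on the Lie algebroid $C$. Differentiating this action recovers the map $U\mapsto \mathcal{L}_{\rho(U)}$ of Lemma \ref{leminfact}. Since the $G$-action on $S$ is free and proper, we are now in the setting of Theorem \ref{thm:main}.

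The conclusion then reduces to showing that $C$ is integrable. Here one invokes the fact that a Lie subalgebroid of an integrable Lie algebroid is itself integrable (by \cite{moeint}): since $S$ is integrable, so is $T^*S$, and hence the Lie subalgebroid $C\hookrightarrow T^*S$ identified above is integrable. Theorem \ref{thm:main} then delivers the integrability of $(S/G,\sigma)$ at once. The only real point where care is needed is the bracket-matching check that lets us treat $C$ as a Lie subalgebroid of $T^*S$; once that is in place the result follows by direct application of the machinery already developed.
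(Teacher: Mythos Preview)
Your overall strategy matches the paper's: identify $C$ as a Lie subalgebroid of $T^*S$, conclude it is integrable because $T^*S$ is, and invoke Theorem~\ref{thm:main}. The bracket computation showing that \eqref{eq:bra} agrees with the cotangent bracket is fine, and the reduction to Theorem~\ref{thm:main} is exactly what the paper does.

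There is, however, a genuine error in your verification that the $G$-action on $C$ is by Lie algebroid automorphisms. You write that ``the cotangent lift of the $G$-action on $S$ acts on $T^*S$ by Lie algebroid automorphisms precisely because $G$ acts on $S$ by Poisson diffeomorphisms.'' But a Poisson action of a Poisson group $(G,\pi_G)$ with $\pi_G\neq 0$ is \emph{not} an action by Poisson diffeomorphisms: the action map $G\times S\to S$ being Poisson does not force each $\phi_g$ to preserve $\pi$. Consequently the cotangent lift is \emph{not} by automorphisms of the cotangent Lie algebroid $T^*S$ in general. What is true---and what you actually need---is that the cotangent lift restricts to an action by Lie algebroid automorphisms of $C$. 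The reason is that $\mathcal{L}_{\rho(U)}\pi=\rho(\delta(U))$ lies in $\Gamma(\wedge^2\mathcal{F})$, where $\mathcal{F}$ is the orbit distribution; integrating (and using that $\phi_g$ preserves $\mathcal{F}$) gives $(\phi_g)_*\pi-\pi\in\Gamma(\wedge^2\mathcal{F})$, so $((\phi_g)_*\pi-\pi)^\sharp$ kills $C=\mathcal{F}^\circ$ and hence the cotangent lift intertwines the anchor $\pi^\sharp|_C$. Since the bracket \eqref{eq:bra} is built from the anchor and the de Rham operators, bracket compatibility then follows. With this correction your argument goes through; the paper's own proof simply takes this verification for granted.
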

\begin{proof} In this situation, $C$ as in Theorem \ref{thm:main} is the conormal bundle of the $G$-orbits on $S$ and it is a Lie subalgebroid of the cotangent Lie algebroid of $S$. Therefore, $C$ is integrable if so is $S$ and hence the result follows from Theorem \ref{thm:main}. \end{proof} 
\begin{rema} Notice that, in principle, we can apply Theorem \ref{thm:main} even when $S$ is not integrable by considering only the Lie subalgebroid $C \hookrightarrow T^*S$. In the case that $G$ is complete, this result appears in \cite{ferigl}. \end{rema}
Let us describe explicitly the integration of the quotient $S/G$ as in Corollary \ref{crl:coired} provided by Theorem \ref{thm:main}. Let $\Sigma(S) \rightrightarrows S$ be the source-simply-connected integration of $S$.  There is a Poisson map $\mu:\Sigma(S) \rightarrow G^*$ which is also a Lie groupoid morphism lifting the Lie algebroid morphism given the dual of the action map $T^*S \rightarrow \mathfrak{g}^*$. So there is an infinitesimal $\mathfrak{g} $-action on $\Sigma(S)$ which is not complete in general unless $G$ is complete \cite{ferigl}. Since the $\mathfrak{g} $-action is locally free, $\mu$ is a submersion and so $\mu^{-1}(1) \hookrightarrow \Sigma(S)$ is a Lie subgroupoid integrating the Lie subalgebroid $C \hookrightarrow T^*S$. If we consider $\mathcal{G} (C)$, Theorem \ref{thm:main} tells us that the $\mathfrak{g} $-action on $\mathcal{G} (C)$ given by the composition of the canonical morphism $\mathcal{G} (C) \rightarrow \mu^{-1}(1)$ with the inclusion $ \mu^{-1}(1) \hookrightarrow \Sigma(S)$ integrates to a $G$-action and that the quotient $\mathcal{G} (C)/G$ is a symplectic groupoid integrating $S/G$ (in fact, $\mathcal{G} (C)/G=\Sigma(S/G)$, see Remark \ref{rem:source-simply-connected}).

\subsubsection{Integrability of quotients of q-Poisson $G $-manifolds} If $(S,\pi)$ is a q-Poisson $\mathfrak{g}$-manifold, then there is a nonobvious but canonical Lie algebroid structure on $T^*S$, see \cite[Thm. 1]{liblasev}; we shall denote it by $(T^*S)_\mathfrak{g}$. It is immediate that $C$ as before is a Lie subalgebroid of $(T^*S)_\mathfrak{g}$ (provided the $\mathfrak{g}$-action is locally free). For the sake of making the analogy with the previous situation more evident, we shall also say in this case that $S$ is {\em integrable} if $(T^*S)_\mathfrak{g}$ is. If the $\mathfrak{g} $-action on $S$ is integrable to a $G$-action such that $\pi$ is invariant, $(S,\pi)$ is called a {\em q-Poisson $G$-manifold}.
\begin{coro} Let $(S,\pi)$ be a q-Poisson ${G}$-manifold. Then the Poisson structure induced on $ S/G$ is integrable if $S$ is integrable. \qed\end{coro}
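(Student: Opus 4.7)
The plan is to deduce the corollary directly from Theorem \ref{thm:main} by identifying the Lie algebroid $C$ of Proposition \ref{pro:con} with a Lie subalgebroid of $(T^*S)_\mathfrak{g}$, and then invoking the stability of integrability under passage to Lie subalgebroids.

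First I would check that Theorem \ref{thm:main} applies in this setting. A q-Poisson $G$-manifold is a Hamiltonian space for $(\mathfrak{g},\delta,\chi)$ regarded as a Lie quasi-bialgebroid over a point, so the moment map is trivially a surjective submersion. The infinitesimal action of $\mathfrak{g}$ on $C$ provided by Lemma \ref{leminfact} integrates to a $G$-action on $C$: indeed, the cotangent lift of the $G$-action on $S$ preserves $\pi$ by the defining $G$-invariance of a q-Poisson $G$-manifold, hence also preserves the anchor and bracket \eqref{eq:bra}; and it preserves $C$ because $G$ permutes the $\mathfrak{g}$-orbits. Freeness and properness of the $G$-action on $C$ follow from the corresponding properties on $S$. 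Thus Theorem \ref{thm:main} reduces the problem to showing that $C$ is integrable.

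Second I would invoke the observation, stated just before the corollary, that $C$ sits as a Lie subalgebroid of $(T^*S)_\mathfrak{g}$. The verification amounts to comparing the bracket \eqref{eq:bra} with the explicit formula for the bracket on $(T^*S)_\mathfrak{g}$ given in \cite{liblasev}: the correction terms in the latter involving the infinitesimal $\mathfrak{g}$-action vanish when evaluated on sections of $C$, since such sections annihilate the image of the action map. This bracket comparison is the only non-routine ingredient and is essentially the content of the remark preceding the corollary.

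With $C$ realized as a Lie subalgebroid of $(T^*S)_\mathfrak{g}$, the hypothesis that $S$ is integrable (i.e.\ that $(T^*S)_\mathfrak{g}$ is integrable) together with the classical fact that a Lie subalgebroid of an integrable Lie algebroid is itself integrable \cite{moeint} yields the integrability of $C$. A second appeal to Theorem \ref{thm:main} then finishes the proof, producing $\mathcal{G}(C)/G$ as an explicit symplectic groupoid integrating $(S/G,\sigma)$. I expect the bracket comparison to be the only potential obstacle, but since it is already signalled in the preceding paragraph, the argument amounts to a direct computation using the formulas of \cite{liblasev}.
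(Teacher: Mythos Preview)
Your proposal is correct and follows essentially the same approach as the paper: the corollary carries a bare \qed because the paper has just observed (in the sentence immediately preceding it) that $C$ is a Lie subalgebroid of $(T^*S)_\mathfrak{g}$, so integrability of the latter forces integrability of $C$, and Theorem \ref{thm:main} does the rest. Your write-up simply makes explicit the verification of the hypotheses of Theorem \ref{thm:main} (notably that the $G$-invariance of $\pi$ yields the required $G$-action on $C$ via the cotangent lift), which the paper leaves implicit.
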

 
Let us illustrate more precisely the integration provided of quotients of q-Poisson $G$-manifolds provided by Theorem \ref{thm:main} and let us compare it with the results of \cite{liblasev} about the integration of q-Poisson $G$-manifolds. 

Let $(S,\pi)$ be a q-Poisson $G $-manifold \cite{alemeikos}. Suppose that $G$ acts freely and properly on $S$. We have that \cite[Thm 1]{liblasev} states that the dual of the action map $T^*S \rightarrow \mathfrak{g}^*$ composed with the isomorphism $\mathfrak{g} \cong \mathfrak{g}^*$ induced by the bilinear form gives us a Lie algebroid morphism $(T^*S)_\mathfrak{g}\rightarrow \mathfrak{g} $. If $(T^*S)_\mathfrak{g}$ is integrable, then this morphism can be lifted to a moment map $\Phi:\mathcal{G}(T^*S)_ \mathfrak{g} \rightarrow G$ which makes $\mathcal{G}(T^*S)_ \mathfrak{g} $ into a q-Hamiltonian $\mathfrak{g}$-manifold (groupoid), see \cite[Thm 4]{liblasev}. If the $\mathfrak{g}$-action is locally free on $S$ (and hence on $\mathcal{G}(T^*S)_ \mathfrak{g}$), we have that $\Phi^{-1}(1)$ is a Lie groupoid. Indeed, \cite[Remark 3.3]{alemeimal} says that there is a 2-form $\varpi\in \Omega^2(\mathfrak{g} )$ such that, if we take a neighborhood $U$ of $1\in G$ covered diffeomorphically by a neighborhood of $0\in \mathfrak{g} $ using $\exp : \mathfrak{g} \rightarrow G$, then $\Omega:=\omega - \Phi^* \log^* \varpi$ is symplectic on $\Phi^{-1}(U)$ and $\mu:=\log \circ \Phi$ is a classical moment map for the $\mathfrak{g}$-action. Since the $\mathfrak{g} $-action is locally free on $S$, it is also locally free on $\mathcal{G}(T^*S)_ \mathfrak{g}$. The usual property of moment maps $(\ker T\mu )^\Omega=\mathfrak{g}_\mathcal{M} $ implies that $\mu$ is a submersion on $\Phi^{-1}(U)$ and then so is $\Phi$. If we consider the source-simply-connected integration $\widetilde{ \Phi^{-1}(1)}$ of $C=\text{Lie} (\Phi^{-1}(1))$, we have a $G$-action by automorphisms on $\widetilde{\Phi^{-1}(1)}$. Theorem \ref{thm:main} implies that $\widetilde{\Phi^{-1}(1)}/{G} $ is a symplectic groupoid which integrates the Poisson structure on $S/G$. 
\begin{exa}\label{intqpoigman} The most important examples of q-Poisson $G$-manifolds are the spaces of representations of fundamental groups of surfaces \cite{alemeimal,alemeikos}. For instance, the q-Poisson $G$-manifold associated to an annulus is $G$ itself and the integrability of the Lie algebroid structure on $(T^*G)_{\mathfrak{g} }$ is automatic, being an action Lie algebroid \cite{equger}. The difficulty in dealing with these spaces lies in the fact that the $G$-action on them is not free, see \cite{intquopoi}. \end{exa} 
  
\section{Double symplectic groupoids and gauge Poisson groupoids} In order to describe the following construction, we need to recall the following construction. A Lie subgroup $H\subset G$ of a Poisson group is coisotropic if it is coisotropic as a submanifold of $G$\footnote{Let $(M,\pi)$ be a Poisson manifold. A submanifold $C$ of $M$ is coisotropic if $\pi^\sharp(T^\circ C)\subset TC$, where $T^\circ C$ is the annihilator of $TC$. }; if $G$ is connected, this is equivalent to the annihilator $\mathfrak{h}^\circ\subset \mathfrak{g}^*$ being a Lie subalgebra. If a Poisson group acts in a Poisson fashion on a Poisson manifold, then the quotient of the manifold by a coisotropic subgroup is a Poisson manifold again, provided it is smooth \cite[Thm. 6]{semdres}. 

Let $(G,\pi_G)$ be a Poisson group acting in a Poisson fashion on a Poisson manifold $(M,\pi_M)$ If the $G$-action is free and proper, then $M/G$ inherits a unique Poisson structure such that the projection map is a Poisson morphism $M \rightarrow M/G$ \cite{semdres}. Now consider the Poisson manifold $M \times \overline{M}$, which is the product $M \times M$ endowed with the Poisson bivector $(\pi_M, -\pi_M)$. We have that the action of the Poisson group $G \times \overline{G}= (G \times G, (\pi_G,-\pi_G))$ on $M \times \overline{M}$ is Poisson again and the diagonal subgroup $G \hookrightarrow G \times \overline{G} $ is a coisotropic subgroup. Therefore, the quotient by the diagonal action $(M \times \overline{M} )/G $ is a Poisson manifold again. For a manifold $M$, the associated \emph{pair groupoid} is $M\times M \rightrightarrows M$ with source and target the projections on $M$, the multiplication is $\mathtt{m} ( (x,y), (y,z))=(x,z)$, the unit map $M\rightarrow M\times M$ is the diagonal inclusion and the inversion is given by $(x,y)\mapsto (y,x)$. Since $G$ acts by automorphisms on  $M \times M \rightrightarrows M$, the quotient 
\begin{align}  ((M \times \overline{M}) /G,\Pi) \rightrightarrows M/G \label{eq:gaugro} \end{align} 
is Lie groupoid again, called a {\em gauge groupoid} \cite{dufzun}. It turns out that the Poisson structure on this Lie groupoid is compatible with the groupoid structure in the following sense.
\begin{defi}[\cite{weicoi}] A {\em Poisson groupoid} is a Lie groupoid $\mathcal{G}  \rightrightarrows M$ with a Poisson structure on $\mathcal{G}$ such that the graph of the multiplication map is a coisotropic submanifold of $\mathcal{G}\times \mathcal{G}\times \overline{\mathcal{G}}$, where $\overline{\mathcal{G}}$ denotes $\mathcal{G}$ with the opposite Poisson structure. \end{defi} 
Poisson groups and symplectic groupoids (seen as Poisson manifolds) are extreme examples of Poisson groupoids. The pair groupoid $M \times \overline{M} \rightrightarrows M$ gives us another family of examples. Since the projection map $M \times \overline{M} \rightarrow ((M \times \overline{M})/G,\Pi)$ is a Lie groupoid morphism and a Poisson morphism, the bivector field $\Pi$ makes \eqref{eq:gaugro} into a Poisson groupoid, this fact was first observed by J.-H. Lu and her collaborators. Since we are dealing with Poisson groupoids, we can ask a more refined integrability question about this gauge Poisson groupoid. Some of the symplectic integrations of a Poisson groupoid may carry a {\em double symplectic groupoid} structure.  
\subsubsection{Double symplectic groupoids} \begin{defi}[\cite{browmac,macdou}] A groupoid object in the category of topological groupoids is called a double topological groupoid and it is denoted by a diagram of the following kind
\[ \xymatrix{ \mathcal{G}\ar@<-.5ex>[r] \ar@<.5ex>[r]\ar@<-.5ex>[d] \ar@<.5ex>[d]& H\ar@<-.5ex>[d] \ar@<.5ex>[d] \\ K\ar@<-.5ex>[r] \ar@<.5ex>[r] & S, }\]  
where each of the sides represents a groupoid structure and the structure maps of $\mathcal{G} $ over $H$ are groupoid morphisms with respect to $\mathcal{G} \rightrightarrows K$ and $H \rightrightarrows S$. A double topological groupoid as in the previous diagram is a {\em double Lie groupoid} if the following conditions are met: (1) each of the side groupoids is a smooth groupoid, (2) $H$ and $K $ are Lie groupoids over $S$, and (3) the double source map $(\mathtt{s}^H,\mathtt{s}^K):  \mathcal{G} \rightarrow {H} \times_S {K} $ is a surjective submersion (the superindices $\quad^H,\quad^K$ denote the groupoid structures $\mathcal{G} \rightrightarrows H$, $\mathcal{G} \rightrightarrows K$ respectively). \end{defi}

\begin{defi}[\cite{luwei}] A double Lie groupoid $\mathcal{G} $ with sides $K$ and $H$ over $S$ is a {\em double symplectic groupoid} if there is a symplectic structure on $\mathcal{G} $ making it into a symplectic groupoid over both $K$ and $H$. \end{defi}
In the previous definition, the Poisson structures induced on $K$ and on $H$ make them into Poisson groupoids over $S$, see \cite{macdou2}. If the Poisson structure on a Poisson groupoid is integrable by a double symplectic groupoid, we say that the Poisson groupoid is integrable.
\subsection{Integrability of gauge Poisson groupoids in the complete case} Now we shall see that a gauge Poisson groupoid as in \eqref{eq:gaugro} is integrable by a double symplectic groupoid if $G$ is complete.
\begin{thm}\label{thm:gaugro} Let $G$ be a complete Poisson group acting freely and properly on a Poisson manifold $M$. If $M$ is integrable, then the gauge Poisson groupoid $(M \times \overline{M})/G \rightrightarrows M/G$ is integrable by a double symplectic groupoid. \end{thm}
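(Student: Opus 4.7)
The plan is to realize the double symplectic groupoid as a reduction of a natural double symplectic groupoid attached to $\Sigma(M) \times \overline{\Sigma(M)}$. First, the completeness of $G$ together with the discussion following Corollary \ref{crl:coired} provides a lift of the Poisson $G$-action on $M$ to a $G$-action on $\Sigma(M)$ by symplectic groupoid automorphisms, along with a $G$-equivariant Poisson map and Lie groupoid morphism $\mu_0 : \Sigma(M) \to G^*$. For the diagonal $G$-action on the product, the moment map is
\[ \mu : \Sigma(M) \times \overline{\Sigma(M)} \to G^*, \qquad \mu(x,y) = \mu_0(x) \mu_0(y)^{-1}. \]

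The key observation is that $\Sigma(M) \times \overline{\Sigma(M)}$ is already a double symplectic groupoid before any quotient: vertically it is the product symplectic groupoid $\Sigma(M) \times \overline{\Sigma(M)} \rightrightarrows M \times \overline{M}$, while horizontally it is the pair groupoid $\Sigma(M) \times \overline{\Sigma(M)} \rightrightarrows \Sigma(M)$. The natural symplectic form $\omega$ (the form on the first factor minus the form on the second) is multiplicative for both structures; for the horizontal pair groupoid this is the standard fact that a difference of pullbacks from the two factors is multiplicative. Since $\mu_0$ is a Lie groupoid morphism for both the source and target structures, the level set $\mu^{-1}(1) = \{(x,y) : \mu_0(x) = \mu_0(y)\}$ is closed under both multiplications and is therefore a double Lie subgroupoid of $\Sigma(M) \times \overline{\Sigma(M)}$.

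I then take $\mathcal{D} := \mu^{-1}(1)/G$ for the diagonal $G$-action, which is free and proper (inherited from the action on $M$) and preserves the entire double groupoid structure. The quotient $\mathcal{D}$ inherits a double Lie groupoid structure with sides $(M \times \overline{M})/G$ and $\Sigma(M)/G$ and base $M/G$. The symplectic structure comes from Marsden-Weinstein reduction: the restriction of $\omega$ to $\mu^{-1}(1)$ has kernel along the $G$-orbits and descends to a symplectic form $\widetilde{\omega}$ on $\mathcal{D}$. Theorem \ref{thm:main} applied to the diagonal Poisson $G$-action on $M \times \overline{M}$ ensures that $(\mathcal{D}, \widetilde{\omega})$ is a symplectic groupoid over $(M \times \overline{M})/G$ integrating the gauge Poisson groupoid. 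Multiplicativity of $\widetilde{\omega}$ for the horizontal structure follows by restricting the analogous property of $\omega$ to the double subgroupoid $\mu^{-1}(1)$ and descending along the $G$-equivariant quotient.

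The main technical obstacle is verifying that $\widetilde{\omega}$ is nondegenerate, i.e.\ that $\ker(\omega|_{\mu^{-1}(1)})$ equals the tangent distribution to the $G$-orbits. This is the coisotropic-subgroup analogue of Step 4 in the proof of Theorem \ref{thm:main}. Completeness of $G$ enters essentially here: it ensures that the infinitesimal $\mathfrak{g}$-action on $\Sigma(M)$ induced by $\mu_0$ integrates to a global $G$-action, so that $\mu^{-1}(1) \to \mathcal{D}$ is a genuine principal $G$-bundle and the reduction is legitimate.
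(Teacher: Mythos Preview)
Your construction is essentially the paper's: both take the level set $\{(x,y):\mu_0(x)=\mu_0(y)\}\subset \Sigma(M)\times\overline{\Sigma(M)}$ and quotient by the diagonal $G$-action, using that $\Sigma(M)\times\overline{\Sigma(M)}$ is already a double symplectic groupoid. The paper writes this level set as $(\mu,\mu)^{-1}(G^*_\Delta)$ for the product map into $G^*\times\overline{G^*}$, which avoids the question of whether $\mu_0(x)\mu_0(y)^{-1}$ is literally a Lu moment map for the diagonal action; your description of the set is the same, but that moment-map formula is not quite correct in the Poisson-group setting.

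The one genuine gap is your appeal to Theorem~\ref{thm:main} for the vertical symplectic groupoid structure on $\mathcal{D}\rightrightarrows (M\times\overline{M})/G$. The diagonal $G\hookrightarrow G\times\overline{G}$ is only a coisotropic subgroup, not a Poisson subgroup, so the diagonal action on $M\times\overline{M}$ is \emph{not} a Poisson action of a Poisson group and does not fit the hypotheses of Theorem~\ref{thm:main} or Corollary~\ref{crl:coired}. Moreover, Theorem~\ref{thm:main} produces the quotient of the source-simply-connected integration $\mathcal{G}(C)$, whereas your $\mu^{-1}(1)$ need not have $1$-connected source fibres. The paper handles this step by invoking the coisotropic-subgroup reduction of \cite[Thm.~3]{ferigl} directly; the remark immediately following Theorem~\ref{thm:gaugro} in the paper makes explicit that Theorem~\ref{thm:main} cannot be adapted here, because the $G$-action on the level set is twisted by the moment map rather than by automorphisms. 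Your sketched Marsden--Weinstein argument (kernel of $\omega$ equals the orbit distribution) is in fact the content of that \cite{ferigl} result, so the repair is to cite that rather than Theorem~\ref{thm:main}.
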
 
\begin{proof} Let $G$ be a Poisson group acting freely and properly on a a Poisson manifold $M$ by a Poisson action and suppose that $M$ is integrable. Let $\Sigma(M) \rightrightarrows M$ be the source-simply-connected integration of $M$. Then we have a Lie groupoid morphism $\mu:\Sigma(M) \rightarrow G^*$ which is a moment map for a Poisson $\mathfrak{g} $-action on $\Sigma(M)$ \cite{burcabhoy,morla}. In that case, $(\mu,\mu): \Sigma (M) \times \overline{\Sigma(M)} \rightarrow G^* \times \overline{G^*}$ is also a Lie groupoid morphism and a moment map for a $\mathfrak{g} \times \mathfrak{g} $-action. Then $(\mu,\mu)^{-1}(G^*_\Delta)$ is a double Lie groupoid with sides $\Sigma(M)$ and $M \times \overline{M}$, where $G^*_\Delta \hookrightarrow G^* \times \overline{G^*}$ is the diagonal inclusion. If $G$ is complete, then the $\mathfrak{g} \times \mathfrak{g} $-action on $\Sigma (M) \times \overline{\Sigma(M) }$ is already integrable by a $G \times G$-action, see \cite[Thm. 1]{ferigl}. So we can consider the diagonal action restricted to $(\mu,\mu)^{-1}(G^*_\Delta)$. We have that $(\mu,\mu)^{-1}(G^*_\Delta)/G \rightrightarrows (M \times \overline{M})/G $ is a symplectic groupoid integrating the Poisson structure $\Pi$, see the proof of \cite[Thm. 3]{ferigl}. On the other hand, the diagonal $G$-action on $\Sigma (M) \times \overline{\Sigma(M)}$ clearly preserves the pair groupoid structure $\Sigma (M) \times \overline{\Sigma(M)} \rightrightarrows \Sigma(M)$, so we have a double Lie groupoid:
\[ \xymatrix{ (\mu,\mu)^{-1}(G^*_\Delta)/G \ar@<-.5ex>[r] \ar@<.5ex>[r]\ar@<-.5ex>[d] \ar@<.5ex>[d]& \Sigma(M)/G \ar@<-.5ex>[d] \ar@<.5ex>[d] \\ (M \times \overline{M})/G \ar@<-.5ex>[r] \ar@<.5ex>[r] & M/G. }\]
Finally, the symplectic structure on $(\mu,\mu)^{-1}(G^*_\Delta)/G$ is also multiplicative with respect to $(\mu,\mu)^{-1}(G^*_\Delta)/G \rightrightarrows \Sigma(M)/G$, since this is just a symplectic reduction of a pair symplectic groupoid. Therefore, we get a double symplectic groupoid over the gauge Poisson groupoid \eqref{eq:gaugro} as desired. \end{proof}
\begin{rema} We do not know if we can remove the completeness condition on $G$ in the previous theorem. If we knew that the $\mathfrak{g}\times \mathfrak{g} $-action on $\Sigma (M) \times \overline{\Sigma(M) }$ restricted to the diagonal $\mathfrak{g} $ integrates to a $G$-action on $(\mu,\mu)^{-1}(G^*_\Delta)$, then we could conclude that the quotient $(\mu,\mu)^{-1}(G^*_\Delta)/G$ is a double symplectic groupoid integrating the gauge Poisson groupoid \eqref{eq:gaugro}. Unfortunately, we cannot adapt Theorem \ref{main} to this situation since it only tells us how to produce $G$-actions by automorphisms and, in the complete case, the $G$-action on $(\mu,\mu)^{-1}(G^*_\Delta)$ is twisted by the moment map, see \cite{ferigl}. \end{rema}
\begin{rema} As a consequence of \cite[Proposition 7]{folpoigro}, we have that the $(\mu,\mu)^{-1}(G^*_\Delta)/G$-orbits of the units in $\Sigma(M)/G$ and in $(M \times \overline{M}) /G$ are symplectic groupoids themselves. So we get in this way a number of nontrivial examples of symplectic groupoids. Let us notice that the symplectic leaves of the units in $(M \times \overline{M}) /G \rightrightarrows M/G$ also give us symplectic groupoids even when $M/G$ is not integrable \cite[Proposition 12]{folpoigro}. \end{rema}
\begin{rema} The crucial fact in the proof of Theorem \ref{thm:gaugro} is that $(\mu,\mu): \Sigma(M) \times \Sigma(M) \rightarrow G^* \times G^*$ is a morphism of {\em double Poisson groupoids}, where $\Sigma(M) \times \overline{ \Sigma(M)}$ is seen as a double symplectic groupoid with sides $\Sigma(M)$ and $M \times \overline{M}$ and $G^* \times \overline{  G^*} \rightrightarrows G^* $ is seen as a Poisson 2-group \cite{poi2gro}. So we could formulate and prove a more general result about the integrability of quotients of Poisson groupoids by actions of coisotropic Lie 2-subgroups of Poisson 2-groups. Due to a certain lack of examples in this generality, we limit ourselves to the current formulation. \end{rema}  
The simplest examples of double symplectic groupoids associated to gauge Poisson groupoids are the following.
\begin{exa} Let $Q$ be a closed Poisson subgroup of a complete Poisson group $G$. Then the quotient by the diagonal action by left translations $(G \times \overline{G})/Q \rightrightarrows G/Q$ is a gauge Poisson groupoid. Since $G$ is complete, its source-simply-connected integration is the action groupoid $G^* \times G \rightrightarrows G$ associated to the dressing action $(u,x)\mapsto {}^ug$, where $G^*$ is the 1-connected integration of $\mathfrak{g}^*$. The lift of the $G$-action on $G$ by left translations to $G^* \times G$ is given by $a\cdot(u,b)=({}^au,a^ub)$, see \cite[Example 3.12]{ferigl}. The moment map $\mu:G^* \times G \rightarrow G^*$ is the projection on the first factor. Since $Q$ is a Poisson subgroup of $G$, the annihilator $\mathfrak{q}^\circ\subset \mathfrak{g}^*$ is an ideal and so there is a Lie group morphism $p:G^*\rightarrow Q^*$ integrating the projection $\mathfrak{g}^* \rightarrow \mathfrak{q}^*\cong \mathfrak{g}^*/ \mathfrak{q}^\circ$. Then the moment map for the lifted $Q$-action to $G^* \times G$ is $p\circ \mu:G^* \times  G \rightarrow Q^*$. As a consequence of Theorem \ref{thm:gaugro}, $(p\circ \mu,p\circ \mu)^{-1}(Q_\Delta^*)/Q$ is a double symplectic groupoid integrating the gauge Poisson groupoid $(G \times \overline{G} ) /Q \rightrightarrows G/Q$. 

More generally, we can do the following. Let $G$ be a Poisson group and let $Q \hookrightarrow G$ be a closed Poisson subgroup. Suppose that there is a (left) Poisson action of $Q$ on a Poisson manifold $Y$. Then the action $(G \times Y) \times( Q \times \overline{Q}  ) \rightarrow G \times Y$ given by $(g,y,a,b)\mapsto (ga,b^{-1}y)$ is a Poisson action, where $\overline{Q} $ denotes $Q$ with the opposite Poisson structure. Since the diagonal subgroup $Q_{\Delta} \hookrightarrow Q \times \overline{Q}$ is coisotropic, the associated bundle $G \times_Q Y:=(G \times Y)/Q$ is a Poisson manifold. For instance, if $Q_i\subset G$ is a closed Poisson subgroup for $i=1\dots n$, then the quotient $G \times_{Q_1} \times \dots \times_{Q_{n-1}} G/Q_n$ is integrable, see \cite{lumou} for a detailed description of this family of examples. In order to apply Theorem \ref{thm:gaugro}, we can take $M=G \times_{Q} \times \dots \times_{Q} G$ as before. Now consider the residual $Q$-action on the last factor. If $Q$ is complete, then Theorem \ref{thm:gaugro} implies that $(M \times \overline{M})/Q \rightrightarrows M/Q$ is integrable by a double symplectic groupoid, see \cite{lumou2} for Poisson groupoids related to this example. \end{exa}

\printbibliography
\end{document}